        \newcommand{\setleftmargin}[1]{
       \addtolength{\textwidth}{\oddsidemargin}
       \addtolength{\textwidth}{1in}
       \addtolength{\textwidth}{-#1}
       \setlength{\oddsidemargin}{-1in}
       \addtolength{\oddsidemargin}{#1}
       \setlength{\evensidemargin}{\oddsidemargin}
    }
    \newcommand{\setrightmargin}[1]{
       \setlength{\textwidth}{8.5in}
       \addtolength{\textwidth}{-\oddsidemargin}
       \addtolength{\textwidth}{-1in}
       \addtolength{\textwidth}{-#1}
    }
    \newcommand{\settopmargin}[1]{
       \addtolength{\textheight}{\topmargin}
       \addtolength{\textheight}{1in}
       \addtolength{\textheight}{\headheight}
       \addtolength{\textheight}{\headsep}
       \addtolength{\textheight}{-#1}
       \setlength{\topmargin}{-1in}
       \addtolength{\topmargin}{-\headheight}
       \addtolength{\topmargin}{-\headsep}
       \addtolength{\topmargin}{#1}
    }
    \newcommand{\setbottommargin}[1]{
       \setlength{\textheight}{11in}
       \addtolength{\textheight}{-\topmargin}
       \addtolength{\textheight}{-1in}
       \addtolength{\textheight}{-\footskip}
       \addtolength{\textheight}{-#1}
    }
    \newcommand{\setallmargins}[1]{
       \settopmargin{#1}
       \setbottommargin{#1}
       \setleftmargin{#1}
       \setrightmargin{#1}
    } \setallmargins{1.2in}
  \newcommand{\tung}[1]{{\color{blue}#1}}
  \newcommand{\ord}{\text{ord}}
  \newcommand{\F}{\mathbb{F}}
    \newcommand{\Z}{\mathbb{Z}}
\title{A Note on Finite Number Rings 
\thanks{
{\it Mathematics Subject Classification}:  11T06, 11T30, 11Z05, 97H40 }}
\author{Suk-Geun Hwang \thanks{
Department of Mathematics Education, Kyungpook
University, Taegu 702-701, Korea and Department of Mathematics, University of
Wisconsin-Whitewater, Whitewater, WI 53190. USA, Email:
sghwang@knu.ac.kr, hwangs@uww.edu}, Woo Jeon \thanks{ Department of Mathematics \& Statistics, University of Wisconsin-Green Bay, Green Bay, WI 54311, USA, Email: jeonw@uwgb.edu}, and
Ki-Bong Nam \thanks{ Dept. of Math., Univ. of Wisconsin-Whitewater,
Whitewater, WI 53190, USA, Email: namk@uww.edu}, Tung T. Nguyen, \thanks {Dept. of Kinesiology, Western University,
Ontario, Canada, Email: tungnt@uchicago.edu} }
\begin{document}
\maketitle
  \begin{abstract}
We define the finite number ring ${\Bbb Z}_n [\sqrt [m] r]$ where $m,n$ are positive integers and $r$ in an integer akin to the definition of the Gaussian integer ${\Bbb Z}[i]$. This idea is also introduced briefly in \cite{Weil}. By definition, this finite number ring ${\Bbb Z}_n [\sqrt [m] r]$ is naturally isomorphic to the ring
${\Bbb Z}_n[x]/{\langle x^m-r \rangle}$. From an educational standpoint, this description offers a straightforward and elementary presentation of this finite ring, making it suitable for readers who do not have extensive exposure to abstract algebra. We discuss various arithmetical properties of this ring. In particular, when $n=p$ is a prime number
and $\Z_p$ contains a primitive $m$-root of unity, we describe the structure of $\Z_n[\sqrt[m]{r}]$ explicitly. 
  \end{abstract}

    \newtheorem{lemma}{Lemma}
    \newtheorem{prop}{Proposition}
    \newtheorem{thm}{Theorem}
    \newtheorem{coro}{Corollary}
    \newtheorem{definition}{Definition}
 \newtheorem{exa}{Example}[section]
  \newtheorem{rem}{Remark}[section]

 \newtheorem{cor}[thm]{Corollary}

\newtheorem{note}{Note}[section]

Keywords: Radical Extension, Unital Determinant, Finite Fields, Polynomial Factorization
\section{Introduction}

For each positive integer $n$, we will denote by $\Z_n$ the ring of integers modulo $n$. We recommend \cite{J, L} for some good textbooks on the ring $\Z_n$. Throughout the paper, $p$ is a prime number.

Recall that the Gaussian integers ${\Bbb Z}[i]$ is isomorphic to ${\Bbb Z}[x]/\langle x^2+1\rangle$
where $\langle x^2+1 \rangle$ is the ideal of ${\Bbb Z}[x]$ which is generated by $x^2+1$ (see \cite{I, ST}). Note also that $\Z[i]$ can also be considered as the set of all elements of the form $a+b i$ where $a, b \in \Z$ and $i^2=-1.$ Addition and multiplication on $\Z[i]$ can be defined in a natural way. For example 

\[ (a+bi)(c+di)=(ac-bd)+(ad+bc)i. \]
The advantage of this presentation of $\Z[i]$ is that it is rather straightforward and can be understood by readers who do not have serious exposure to the theory of quotient rings. 

Motivated by this observation, we can define an extension ring ${\Bbb Z}_n[i]$ of ${\Bbb Z}_n$ as the set of all formal elements of the form $a+bi$ where $a,b \in \Z_n$. Multiplication and addition are also defined in a natural way. More specifically, if $r \in \Z $ (or $\Z_n$) and $m$ is a positive integer, we can define 
\begin{eqnarray*}
& &{\Bbb Z}_n [ \sqrt [m] r]=\{x_0+x_1\sqrt [m] r+\cdots +
x_{m-1}(\sqrt [m] {r})^{m-1}|x_0,\cdots ,x_{m-1}\in {\Bbb Z}_n\}
\end{eqnarray*}

The addition on $\Z_n[\sqrt[m]{r}]$ is given naturally as 
\[ \sum_{i=0}^{m-1} x_i (\sqrt[m]{r})^i + \sum_{i=0}^{m-1} x_i (\sqrt[m]{r})^i= \sum_{i=0}^{m-1} (x_i+y_i) (\sqrt[m]{r})^i  \] 

Similarly, the multiplication on $\Z_n[\sqrt[m]{r}]$  can defined by the following rule (and extended by linearity) \\
\begin{equation}
(\sqrt [m] {r})^{i} (\sqrt [m] {r})^{j}= \begin{cases}(\sqrt [m] {r})^{i+j} & \text{if
$i+j<m$}\\
{ r}(\sqrt [m] { r})^{k} & \text{if
$i+j>m$ and $k\equiv i+j$ mod $m$}\\
{ r} & \text{if
$i+j=m$}
\end{cases}
\end{equation}
We can see that $\Z_n[\sqrt[m]{r}]$ is a commutative ring which is naturally isomorphic to the quotient ring $\Z_n[x]/ \langle x^m -r \rangle$. The goal of this paper is to prove some fundamental properties of these finite rings using the explicit description mentioned above.

\section{Ring theoretic properties of $\Z_n[\sqrt[m]{r}]$}

We begin with a lemma about finite commutative rings. 
\begin{lemma} \label{lem:finite_ring}
    Let $R$ be a finite commutative ring. Then $R$ is an integral domain if and only if $R$ is a field. 
\end{lemma}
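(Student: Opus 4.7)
The plan is to prove the two directions separately, with the forward direction being essentially immediate and the reverse direction relying on a finiteness argument.

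First I would dispatch the easy direction: if $R$ is a field, then every nonzero element is a unit, and a unit cannot be a zero divisor (if $ab = 0$ and $a$ is a unit, then $b = a^{-1}(ab) = 0$). Hence $R$ has no zero divisors and is an integral domain. This uses nothing about finiteness.

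For the harder direction, suppose $R$ is a finite commutative integral domain. I need to show every nonzero $a \in R$ has a multiplicative inverse. The key idea is to consider the multiplication-by-$a$ map $\phi_a : R \to R$ defined by $\phi_a(x) = ax$. Because $R$ is an integral domain, $\phi_a$ is injective: if $ax = ay$, then $a(x - y) = 0$ with $a \neq 0$ forces $x = y$. Since $R$ is finite, an injective self-map is automatically surjective (a standard pigeonhole fact on finite sets). In particular, $1 \in R$ lies in the image, so there exists $b \in R$ with $ab = 1$, which exhibits $b$ as a multiplicative inverse of $a$. Thus every nonzero element is invertible and $R$ is a field.

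The only potential subtlety, rather than a genuine obstacle, is making sure the argument invokes commutativity and the presence of a multiplicative identity consistent with the paper's convention for rings and integral domains; once those conventions are in place, the proof is just the injective-implies-surjective step for self-maps of finite sets. I would present the proof in this order: state the easy direction in one line, then give the multiplication map argument for the converse.
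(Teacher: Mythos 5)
Your proof is correct and follows essentially the same route as the paper: the easy direction is dispatched by noting units are not zero divisors, and the converse uses the multiplication-by-$a$ map, which is injective by the integral domain property and hence surjective by finiteness, producing an inverse of $a$. No gaps.
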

\begin{proof}
    If $R$ is a field then clearly $R$ is an integral domain. Conversely, suppose that $R$ is an integral domain. We claim that each element $a \in R$ such that $a \neq 0$ has an inverse. In fact, let $m_a: R \to R$ be the multiplication map 
    \[ m_a(x) = ax.\] 
    Because $R$ is an integral domain, the map $m_a$ is injective. Since $R$ is finite, $m_a$ must be surjective as well. Therefore, we can find $b \in R$ such that $ab=1.$ This shows that $a$ is invertible in $R.$ Since this is true for all $a \in R$ such that $a \neq 0$, we conclude that $R$ is a field. 
\end{proof}

Suppose that $n=p_1^{\alpha_1} \ldots p_{s}^{\alpha_s}$ be the prime factorization of $n.$ Then, by the Chinese remainder theorem 
\[ \Z_n \cong \prod_{i=1}^{s} \Z_{p_i^{\alpha_i}}. \]
Consequently 
\[ \Z_n[\sqrt[m]r] \cong \prod_{i=1}^{s} \Z_{p_i^{\alpha_i}}[\sqrt[m]r]. \]
A direct consequence of this observation is the following 
\begin{prop} \label{prop:n_is_prime}
If $n$ is not a prime integer, then the ring ${\Bbb Z}_n[\sqrt[m]{r}]$ is not an integral domain.
\end{prop}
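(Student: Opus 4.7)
The plan is to exploit the Chinese Remainder Theorem decomposition already displayed just before the proposition, together with a short separate argument for prime-power moduli. Since $n$ fails to be prime, there are exactly two possibilities: either $n$ has at least two distinct prime divisors, or $n = p^{\alpha}$ with $\alpha \geq 2$. I would handle these two cases separately.

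In the first case, where the prime factorization $n = p_1^{\alpha_1} \cdots p_s^{\alpha_s}$ has $s \geq 2$, the isomorphism
\[ \Z_n[\sqrt[m]{r}] \cong \prod_{i=1}^{s} \Z_{p_i^{\alpha_i}}[\sqrt[m]{r}] \]
expresses the ring as a direct product of at least two nonzero rings. I would then invoke the standard fact that any direct product of two or more nonzero rings contains zero divisors, witnessed by elements of the form $(1,0,\ldots,0)$ and $(0,1,0,\ldots,0)$, whose product is zero while each factor is nonzero. Hence $\Z_n[\sqrt[m]{r}]$ is not an integral domain.

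In the second case, where $n = p^{\alpha}$ with $\alpha \geq 2$, I would observe that the constant subring $\Z_n \hookrightarrow \Z_n[\sqrt[m]{r}]$ already contains zero divisors: the classes $p$ and $p^{\alpha-1}$ are both nonzero in $\Z_n$ but $p \cdot p^{\alpha-1} = p^{\alpha} \equiv 0 \pmod{n}$. These same elements remain nonzero with zero product inside the ambient ring $\Z_n[\sqrt[m]{r}]$, so again it cannot be an integral domain.

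I do not anticipate any serious obstacle; the whole argument is an immediate application of CRT plus the elementary prime-power observation. The only minor subtlety to state cleanly is that in the CRT case one must note that each factor $\Z_{p_i^{\alpha_i}}[\sqrt[m]{r}]$ is a nonzero ring (which is clear since it contains $\Z_{p_i^{\alpha_i}}$, a nonzero ring) so that the witness elements $(1,0,\ldots,0)$ and $(0,1,0,\ldots,0)$ are genuinely nonzero. With that remark in place, the proof reduces to citing the decomposition and carrying out the two-line case analysis.
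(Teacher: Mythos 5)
Your argument is correct, but the two-case structure is unnecessary: the observation you make in your prime-power case already settles the whole proposition. If $n$ is composite, write $n = ab$ with $1 < a, b < n$; then $a$ and $b$ are nonzero in $\Z_n$ yet $ab \equiv 0 \pmod{n}$, so $\Z_n$ is not an integral domain, and since $\Z_n$ embeds as the subring of constants in $\Z_n[\sqrt[m]{r}]$ (the ring being free as a $\Z_n$-module on $1, \sqrt[m]{r}, \ldots, (\sqrt[m]{r})^{m-1}$), these same elements witness a zero divisor in the larger ring. This is exactly the paper's one-line proof, and it subsumes both of your cases. Your first case, via the Chinese Remainder Theorem decomposition into a product of at least two nonzero rings with the idempotent witnesses $(1,0,\ldots,0)$ and $(0,1,0,\ldots,0)$, is also perfectly valid and has the minor virtue of producing zero divisors that are idempotents, but it costs you the case split and an appeal to the CRT isomorphism, neither of which is needed. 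In short: keep your second case, drop the restriction to prime powers, and you recover the paper's argument.
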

\begin{proof} If $n$ is not prime, then ${\Bbb Z}_n$ is not an integral domain. Since the ring ${\Bbb Z}_n[\sqrt r]$ contains ${\Bbb
Z}_n$, we conclude that is also not an integral domain.
\end{proof}
\begin{prop} \label{prop:factorization}
    If $x^m-r$ is reducible over $\Z_n$; namely there exists $f, g \in \Z_n[x]$ such that $\deg(f)>1$ and $\deg(g)>1$ and
    \[ x^m-r = f(x)g(x),\]
    then $\Z_n[\sqrt[m]{r}]$ is not an integral domain. 
\end{prop}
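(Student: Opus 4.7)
The plan is to work in the isomorphic presentation $\Z_n[\sqrt[m]{r}] \cong \Z_n[x]/\langle x^m-r\rangle$ and produce two explicit nonzero elements whose product is zero. Writing $\alpha$ for the image of $x$ in the quotient, so that $\alpha$ corresponds to $\sqrt[m]{r}$, one has $\alpha^m = r$, and hence the identity $x^m-r = f(x)g(x)$ in $\Z_n[x]$ immediately gives
\[
f(\alpha)\, g(\alpha) \;=\; \alpha^m - r \;=\; 0
\]
inside $\Z_n[\sqrt[m]{r}]$.

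It then remains only to check that neither $f(\alpha)$ nor $g(\alpha)$ is the zero element of $\Z_n[\sqrt[m]{r}]$. Since $\deg f, \deg g > 1$ and $\deg f + \deg g = m$, both factors have degree strictly between $1$ and $m-1$; in particular both are nonzero polynomials of degree less than $m$. Because $x^m-r$ is monic, the usual division algorithm applies over $\Z_n[x]$, so every element of $\Z_n[x]/\langle x^m-r\rangle$ is represented by a \emph{unique} polynomial of degree less than $m$. Under the isomorphism to $\Z_n[\sqrt[m]{r}]$ this is exactly the standard basis representation $x_0+x_1\sqrt[m]{r}+\cdots+x_{m-1}(\sqrt[m]{r})^{m-1}$. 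Consequently, a nonzero polynomial of degree $< m$ must give a nonzero element upon substituting $\alpha$ for $x$; applying this to $f$ and to $g$ yields $f(\alpha)\ne 0$ and $g(\alpha)\ne 0$.

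Putting these together, $f(\alpha)$ and $g(\alpha)$ are nonzero elements of $\Z_n[\sqrt[m]{r}]$ whose product is $0$, so the ring has a zero divisor and hence is not an integral domain.

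The routine step to justify carefully is the uniqueness of the degree-$<m$ representative, but since $x^m-r$ is monic over any commutative ring this is standard; no subtlety arises from $n$ being composite or from $r$ being a zero divisor. I do not expect any significant obstacle beyond this bookkeeping.
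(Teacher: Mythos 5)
Your proof is correct and follows essentially the same route as the paper: substitute $\sqrt[m]{r}$ into the factorization to get a product equal to zero, then argue both factors are nonzero because they are represented by nonzero polynomials of degree less than $m$; you additionally justify the nonvanishing via uniqueness of degree-$<m$ representatives (monic division by $x^m-r$), a detail the paper leaves implicit. The one step you assert too quickly --- that $\deg f+\deg g=m$ --- can fail over $\Z_n$ for composite $n$, where leading coefficients may multiply to zero (so a factor could a priori have degree $\geq m$); but in that case $\Z_n$ is already not a domain and the conclusion is immediate from Proposition~\ref{prop:n_is_prime}, so the degree bookkeeping only needs to be valid for $n$ prime, where it is, and the paper's own proof carries the same implicit restriction.
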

\begin{proof}
    By our assumption 
    \[ f(\sqrt[m]{r}) g(\sqrt[m]{r}) = (\sqrt[m]{r})^m-r=0.\]
    Since $\deg(f), \deg(g)<m$, we conclude that $f(\sqrt[m]{r})$ must be a non-zero divisor in $\Z_n[\sqrt[m]{r}].$ Therefore, $\Z_n[\sqrt[m]{r}]$ is not an integral domain.
\end{proof}
We have the following condition, which provides a necessary and sufficient condition for $\Z_n[\sqrt[m]r]$ to be a field. 
\begin{thm} \label{thm:fields_condition}
    The finite number ring $\Z_n[\sqrt[m]{r}]$ is a field if and only the following two conditions are satisfied 
    \begin{enumerate}
        \item $n$ is a prime number. 
        \item $x^m-r$ is irreducible in $\Z_n[x].$
    \end{enumerate}
\end{thm}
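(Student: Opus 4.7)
The plan is to prove the two directions separately, taking full advantage of the preceding results.

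For the forward direction (``only if''), I would argue by contraposition. If $\Z_n[\sqrt[m]{r}]$ is a field, then in particular it is an integral domain. Proposition~\ref{prop:n_is_prime} then forces $n$ to be a prime, and Proposition~\ref{prop:factorization} forces $x^m-r$ to be irreducible in $\Z_n[x]$. So this direction essentially packages the two previous propositions.

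For the reverse direction, I would invoke the natural isomorphism $\Z_n[\sqrt[m]{r}] \cong \Z_n[x]/\langle x^m-r\rangle$ noted in the introduction. With $n=p$ prime, $\Z_p$ is a field and so $\Z_p[x]$ is a principal ideal domain. In a PID, a nonzero nonunit element is irreducible if and only if it is prime, and irreducible elements generate maximal ideals. Hence irreducibility of $x^m-r$ in $\Z_p[x]$ implies that $\langle x^m-r\rangle$ is a maximal ideal, making the quotient $\Z_p[x]/\langle x^m-r\rangle$ a field. Transporting this back along the isomorphism yields that $\Z_n[\sqrt[m]{r}]$ is a field.

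Alternatively, since the ring is finite and commutative, Lemma~\ref{lem:finite_ring} tells us it suffices to verify it is an integral domain. If $f(\sqrt[m]{r})\, g(\sqrt[m]{r}) = 0$ for some polynomials $f,g$ of degree less than $m$, then $x^m-r$ divides $f(x)g(x)$ in $\Z_p[x]$; since $x^m-r$ is irreducible in the PID $\Z_p[x]$ it is prime, so it divides $f$ or $g$, forcing one of them to be zero in $\Z_p[x]$ (as both have degree less than $m$). This yields the integral domain property directly. The only nontrivial ingredient here is the standard fact that irreducibles are prime in a PID, which I would state as ``a well-known consequence of $\Z_p[x]$ being a principal ideal domain'' rather than reprove in detail.
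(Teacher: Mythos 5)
Your proposal is correct. The forward direction is exactly the paper's: it cites Lemma~\ref{lem:finite_ring}, Proposition~\ref{prop:n_is_prime}, and Proposition~\ref{prop:factorization}, just as you do. The reverse direction is where you diverge. The paper works directly inside $\Z_p[\sqrt[m]{r}]$: given a nonzero element $a$ with associated polynomial $h$ of degree less than $m$, it uses the Euclidean algorithm in $\Z_p[x]$ together with the irreducibility of $x^m-r$ to produce a B\'ezout identity $h f_1 + (x^m-r) f_2 = 1$, and then evaluates at $\sqrt[m]{r}$ to exhibit $f_1(\sqrt[m]{r})$ as an explicit inverse of $a$. Your first alternative repackages essentially the same fact in the language of maximal ideals of a PID (the B\'ezout identity is precisely the statement that $\langle h, x^m-r\rangle$ is the unit ideal, i.e.\ that $\langle x^m-r\rangle$ is maximal), so it is the same mathematics at a higher level of abstraction; the paper's version has the pedagogical advantage of producing the inverse concretely, which matches its stated elementary aims. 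Your second alternative is more genuinely different: it routes through Lemma~\ref{lem:finite_ring} by verifying the integral domain property via ``irreducible implies prime in a PID,'' which avoids constructing inverses at all but leans on the finiteness of the ring and on a standard PID fact the paper never states. Both of your arguments are sound; just note that in the second one the step from $f(\sqrt[m]{r})g(\sqrt[m]{r})=0$ to $x^m-r \mid f(x)g(x)$ implicitly uses the isomorphism $\Z_p[\sqrt[m]{r}]\cong \Z_p[x]/\langle x^m-r\rangle$ from the introduction, which is worth saying explicitly.
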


\begin{proof}
    The necessary part of this Theorem follows from Lemma \ref{lem:finite_ring}, Proposition \ref{prop:n_is_prime}, and Proposition \ref{prop:factorization}. Conversely, suppose that $n$ is a prime number and $x^m-r$ is irreducible over $\Z_n[r]$, we claim that $\Z_n[\sqrt[m]{r}]$ is a field. Let $a =a_0+a_1 \sqrt [m] r+a_2 (\sqrt [m] {r})^2+ \cdots + a_{m-1} (\sqrt [m] {r})^{m-1}$ be a non-zero element in $\Z_n[\sqrt[m]{r}]$.  Let 
    \[ h(x) = a_0+a_x+\cdots + a_{m-1}x^{m-1} \in \Z_p[x].\]
    Since $p$ is prime, $\Z_p$ is a field, and $\Z_p[x]$ is a Euclidean domain. By the Euclidean algorithm and the fact that $x^m-r$ is irreducible of degree $m$, we can find $f_1, f_2 \in \Z[x]$ such that 
    \[ h(x)f_1(x)+(x^m-r)f_2(x)= 1.\]
    We then see that 
    \[ a f_1(\sqrt[m]r) = h(\sqrt[m]r)f_1(\sqrt[m]r)=1.\]
    This shows that $a$ is invertible as required. 
\end{proof}
\subsection{Units in $\Z_n[\sqrt[m]{r}]$}

In this section, we study units in the ring $\Z_n[\sqrt[m]{r}]$ by embedding it in a matrix ring. We start with the following lemma.

\begin{lemma} \label{lem:det}
A non-zero element $a=a_0+a_1 \sqrt [m] r+a_2 (\sqrt [m] {r})^2+ \cdots + a_{m-1} (\sqrt [m] {r})^{m-1}$ of the
ring ${\Bbb Z}_n [ \sqrt [m] r]$ is a unit, if and only if $A$ is invertible where 
$$A =  \begin{bmatrix}a_0&a_{m-1}r& a_{m-2}r& a_{m-3}r&\cdots &a_1r\\
a_1&a_0&a_{m-1}r & a_{m-2}r&\cdots &a_2r\\
a_2&a_1&a_0&a_{m-1}r &\cdots &a_3r\\
& & & \cdots &\\
a_{m-1}&a_{m-2}&a_{m-3}&a_{m-4}&\cdots &a_0\end{bmatrix}.$$
Equivalently, $a_0+a_1 \sqrt [m] r+a_2 (\sqrt [m] {r})^2+ \cdots + a_{m-1} (\sqrt [m] {r})^{m-1}$ is a unit if and only if $ \Delta = \det(A) \in \Z_n^{\times}.$
\end{lemma}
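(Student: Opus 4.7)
The plan is to recognize the matrix $A$ as the matrix of the multiplication-by-$a$ endomorphism of the free $\Z_n$-module $\Z_n[\sqrt[m]{r}]$, and then invoke the standard fact that a square matrix over a commutative ring is invertible precisely when its determinant is a unit.

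Concretely, write $\zeta = \sqrt[m]{r}$ and view $\Z_n[\sqrt[m]{r}]$ as a free $\Z_n$-module of rank $m$ with ordered basis $\mathcal{B} = (1, \zeta, \zeta^2, \ldots, \zeta^{m-1})$. Let $m_a : \Z_n[\sqrt[m]{r}] \to \Z_n[\sqrt[m]{r}]$ be the $\Z_n$-linear map defined by $m_a(x) = ax$. The first step is to compute, for each $0 \le j \le m-1$, the expansion
\[
a \cdot \zeta^j = a_{m-j}r + a_{m-j+1}r\,\zeta + \cdots + a_{m-1} r\, \zeta^{j-1} + a_0 \zeta^j + a_1 \zeta^{j+1} + \cdots + a_{m-j-1} \zeta^{m-1},
\]
using the multiplication rule $\zeta^{i+j} = r\,\zeta^{i+j-m}$ whenever $i+j \ge m$. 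Reading off the coefficients column by column shows that the matrix of $m_a$ with respect to $\mathcal{B}$ is exactly the matrix $A$ displayed in the statement.

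Next, I would argue the equivalence between $a$ being a unit in $\Z_n[\sqrt[m]{r}]$ and $m_a$ being an invertible $\Z_n$-linear map. If $ab = 1$ for some $b \in \Z_n[\sqrt[m]{r}]$, then $m_a \circ m_b = m_{ab} = \mathrm{id}$, so $m_a$ is invertible; conversely, if $m_a$ is invertible, set $b := m_a^{-1}(1)$, and then $ab = m_a(b) = 1$. Under the basis $\mathcal{B}$, $m_a$ is invertible as a $\Z_n$-linear endomorphism if and only if its matrix $A$ is invertible in $M_m(\Z_n)$.

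Finally, I would invoke the standard criterion for invertibility of a matrix over a commutative ring: $A \in M_m(\Z_n)$ is invertible if and only if $\det(A) \in \Z_n^\times$, with inverse given by $\det(A)^{-1} \operatorname{adj}(A)$. Chaining these equivalences gives the stated result. The only real bookkeeping obstacle is the column-by-column verification that $A$ represents $m_a$; the ring-theoretic equivalences and the determinantal criterion are then routine.
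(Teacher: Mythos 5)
Your proof is correct and follows essentially the same route as the paper: both identify $A$ with the matrix of the multiplication-by-$a$ map on the free $\Z_n$-module with basis $1, \sqrt[m]{r}, \ldots, (\sqrt[m]{r})^{m-1}$ (the paper does this by writing out the linear system $Ab=e_0$ and, for the converse, assembling the columns $Ab_i=e_i$ into $AB=I_m$). Your version is slightly more complete in that you explicitly justify the final equivalence $A$ invertible $\iff \det(A)\in\Z_n^{\times}$ via the adjugate, a step the paper leaves implicit.
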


\begin{proof}
Suppose that $A$ is invertible. We want to find  a non-zero element
\[ b_0+b_1 \sqrt [m] r+b_2 (\sqrt [m] {r})^2+ \cdots + b_{m-1} (\sqrt [m] {r})^{m-1},\]
of the ring $\Z_n[\sqrt[m]{r}]$ such that
\begin{eqnarray}\label{61}
& & (a_0+a_1 \sqrt [m] r+ \cdots + a_{m-1} (\sqrt [m] {r})^{m-1} )
(b_0+b_1 \sqrt [m] r+\cdots + b_{m-1} (\sqrt [m] {r})^{m-1} )\nonumber =1.
\end{eqnarray}
This is equivalent to 
\begin{eqnarray}\label{611}
& &a_0b_0+a_{m-1}rb_1+ a_{m-2}r b_2+ a_{m-3}r b_3+\cdots +a_1rb_{m-1}=1\nonumber\\
& &a_1b_0+a_0b_1+a_{m-1}r b_2+ a_{m-2}r b_3+\cdots +a_2rb_{m-1}=0\nonumber\\
& &a_2b_0+a_1b_1+a_0b_2+a_{m-1}rb_3+\cdots +a_3rb_{m-1}=0\nonumber\\
& & \qquad \qquad \qquad \qquad \qquad \qquad \cdots \nonumber\\
& &a_{m-1}b_0+a_{m-2}b_1+a_{m-3}b_2+a_{m-4}b_3+\cdots +a_0b_{m-1}=0.
\end{eqnarray}
Since $A$ is invertible, the system (\ref{611}) has a unique solution. 
This implies that the system of linear equations (\ref{611}) has a unique non-zero solution. We conclude that $a$ is invertible. Conversely, suppose that $a$ is invertible. Then the multiplication $m_a: \Z_n[\sqrt[m]{r}] \to \Z_n[\sqrt[m]{r}]$ is invertible. In particular, for each $0 \leq i \leq m-1$, we can find 
\[ v_i = b_{0i}+b_{1i} \sqrt[m]{r}+ \ldots+ b_{(m-1)i} (\sqrt[m]{r})^{m-1}, \]
such that $av_i = (\sqrt[m]{r})^i.$ In other words, we have 

\begin{align*}
    A \begin{bmatrix}
           b_{0i} \\
           b_{1i} \\
           \vdots \\
           b_{ii} \\
           \vdots \\
           b_{(m-1)i}
         \end{bmatrix} = e_i = \begin{bmatrix}
           0 \\
           0 \\
           \vdots \\
           1 \\
           \vdots \\
           0
         \end{bmatrix} .
  \end{align*} 
Therefore, if we let $B=(b_{ij})_{0 \leq i, j \leq m-1}$ then $AB=I_{m}$. This shows that $A$ is invertible. 

\end{proof}
\bigskip

\begin{rem}
    When $r=1$, the matrix $A$ above is often called a circulant matrix (see \cite{Davis, CM1}). Such matrices have diverse applications in many scientific fields including engineering, physics, network theory, as well as pure and applied mathematics.
\end{rem}

Let us call the determinant $\Delta$ of (\ref{lem:det}) the unital determinant of the element
$a_0+a_1 \sqrt [m] r+a_2 (\sqrt [m] {r})^2+ \cdots + a_{m-1} (\sqrt [m] {r})^{m-1}$ in the
ring ${\Bbb Z}_p [ \sqrt [m] r]$. From Lemma \ref{lem:det}, we have the following

\begin{prop} Let $p$ be a prime number. If the unital determinant of any non-zero element of the
ring ${\Bbb Z}_p [\sqrt [m] r]$ is not zero, then ${\Bbb Z}_p [\sqrt [m] r]$ is a field. 
\end{prop}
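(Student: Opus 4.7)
The plan is to observe that this is essentially a direct corollary of Lemma~\ref{lem:det} combined with the fact that $\Z_p$ is a field when $p$ is prime. The only real work is translating the hypothesis ``unital determinant is nonzero'' into ``unital determinant is a unit,'' which is trivial in $\Z_p$.

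More concretely, I would proceed as follows. Take an arbitrary nonzero element $a = a_0 + a_1 \sqrt[m]{r} + \cdots + a_{m-1}(\sqrt[m]{r})^{m-1}$ of $\Z_p[\sqrt[m]{r}]$, and let $A$ be the associated matrix from Lemma~\ref{lem:det} with unital determinant $\Delta = \det(A) \in \Z_p$. By the standing hypothesis of the proposition, $\Delta \neq 0$. Since $p$ is prime, $\Z_p$ is a field, so every nonzero element of $\Z_p$ is a unit; in particular $\Delta \in \Z_p^{\times}$. Lemma~\ref{lem:det} then gives that $a$ is invertible in $\Z_p[\sqrt[m]{r}]$.

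Because $a$ was an arbitrary nonzero element, every nonzero element of $\Z_p[\sqrt[m]{r}]$ is a unit. Together with the fact that $\Z_p[\sqrt[m]{r}]$ is a (nonzero) commutative ring by construction, this is exactly the definition of a field, completing the argument.

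There is no real obstacle here; the substantive content was already packaged in Lemma~\ref{lem:det}. The only point worth stressing in the write-up is the implicit use of primality of $p$, which is what upgrades ``$\Delta \neq 0$'' to ``$\Delta$ is invertible in $\Z_p$''—if $n$ were composite the analogous statement would fail, consistently with Proposition~\ref{prop:n_is_prime}.
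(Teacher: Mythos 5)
Your argument is correct and is exactly the route the paper intends: the proposition is stated there as an immediate consequence of Lemma~\ref{lem:det}, with the (unstated) observation that primality of $p$ upgrades $\Delta \neq 0$ to $\Delta \in \Z_p^{\times}$. Your write-up simply makes that one-line step explicit, so nothing further is needed.
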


\begin{cor}
    An element $a+b\sqrt{r}$ of the ring $\Z_n[\sqrt{r}]$ is a unit if and only if $a^2-rb^2 \in \Z_n^{\times}.$
\end{cor}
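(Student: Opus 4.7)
The plan is to specialize Lemma \ref{lem:det} to the case $m=2$ and read off the determinant of the associated $2\times 2$ matrix. This corollary is really just an explicit unpacking of the lemma in the simplest nontrivial case, so the argument should be short and essentially computational.

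More concretely, I would begin by writing $a+b\sqrt{r}$ in the notation of Lemma \ref{lem:det} with $a_0 = a$ and $a_1 = b$. Applying the lemma with $m=2$, the associated matrix is
\[ A = \begin{bmatrix} a_0 & a_1 r \\ a_1 & a_0 \end{bmatrix} = \begin{bmatrix} a & br \\ b & a \end{bmatrix}, \]
which is a $2\times 2$ matrix over $\Z_n$. Its determinant is $\Delta = a^2 - rb^2$.

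Next, I would invoke the standard fact that a square matrix $A$ over the commutative ring $\Z_n$ is invertible if and only if $\det(A) \in \Z_n^{\times}$. Combining this with Lemma \ref{lem:det}, which tells us that $a+b\sqrt{r}$ is a unit in $\Z_n[\sqrt{r}]$ if and only if $A$ is invertible, yields the desired equivalence: $a+b\sqrt{r} \in \Z_n[\sqrt{r}]^{\times}$ if and only if $a^2 - rb^2 \in \Z_n^{\times}$.

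There is no real obstacle here; the only mild point worth mentioning is that the invertibility criterion for matrices over a commutative ring (via the determinant being a unit) is needed rather than the field-theoretic version, but this is standard and follows from the adjugate formula $A \cdot \mathrm{adj}(A) = \det(A) \cdot I$. Everything else is just substitution into Lemma \ref{lem:det}.
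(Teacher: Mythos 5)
Your proposal is correct and follows the paper's own route: the paper likewise deduces the corollary directly from Lemma \ref{lem:det} by observing that the unital determinant of $a+b\sqrt{r}$ is $a^2-rb^2$. Your additional remark about the adjugate formula justifying the determinant criterion over $\Z_n$ is a sensible (and already implicit) detail, not a departure from the paper's argument.
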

\begin{proof}This follows form the fact that the unital determinant of $a+b\sqrt{r}$ is $a^2-rb^2.$
\end{proof}
\begin{cor} \label{cor:field_det}
${\Bbb Z}_p[\sqrt r]$ is an integral domain (equivalently a field by  Lemma \ref{lem:finite_ring}) if and only if there does not exist $(a,b) \neq (0,0) \in \Z_p^{2}$ such that 
\[ a^2-rb^2=0.\]
In other words, $\Z_p[\sqrt{r}]$ is an integral domain if and only $c$ is not a square in $\Z_p.$
\end{cor}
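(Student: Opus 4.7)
The plan is to chain the preceding corollary with Lemma \ref{lem:finite_ring} to reduce the question to an elementary statement about whether $r$ is a square modulo $p$.

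First, I would observe that by Lemma \ref{lem:finite_ring}, $\Z_p[\sqrt{r}]$ is an integral domain if and only if every nonzero element is a unit. The previous corollary tells us that $a+b\sqrt{r}$ is a unit exactly when $a^2-rb^2 \in \Z_p^{\times}$, and because $p$ is prime this is the same as $a^2-rb^2 \neq 0$ in $\Z_p$. Combining these, $\Z_p[\sqrt{r}]$ is a field if and only if there is no pair $(a,b) \neq (0,0)$ in $\Z_p^2$ with $a^2 - rb^2 = 0$, which is the first form of the statement.

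Next, I would verify the equivalence with the squareness condition on $r$. For the forward direction, suppose $r = s^2$ for some $s \in \Z_p$; then the pair $(s,1) \neq (0,0)$ satisfies $s^2 - r \cdot 1^2 = 0$, producing a nontrivial zero of the form $a^2 - rb^2$. For the converse, suppose $(a,b) \neq (0,0)$ satisfies $a^2 = rb^2$. If $b = 0$, then $a^2 = 0$ in the field $\Z_p$ forces $a = 0$, contradicting nontriviality, so $b$ is invertible in $\Z_p$ and $r = (a b^{-1})^2$ is a square.

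The only subtle point, and likely the main potential pitfall, is the case $r = 0$: then $r$ is a square, and indeed $(0,1)$ is a nontrivial zero of $a^2 - rb^2$, so the equivalence still holds without requiring $r \neq 0$. (I would also note the apparent typo in the statement, where $c$ should read $r$.) Everything else is routine field arithmetic in $\Z_p$.
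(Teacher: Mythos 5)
Your proposal is correct and follows essentially the route the paper intends: the paper states this corollary as an immediate consequence of the unital-determinant criterion ($a+b\sqrt{r}$ is a unit iff $a^2-rb^2\in\Z_p^{\times}$) together with Lemma \ref{lem:finite_ring}, which is exactly your chain, and your careful treatment of the $b=0$ and $r=0$ cases (plus spotting the $c$-versus-$r$ typo) only adds detail the paper omits. The paper's subsequent remark offers an alternative derivation via Theorem \ref{thm:fields_condition} (reducibility of $x^2-r$ forces a linear factor), but your argument does not need it.
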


\begin{rem}
    This corollary also follows from Theorem \ref{thm:fields_condition}. In fact, $\Z_p[\sqrt{r}]$ is not a field if and only if $x^2-r$ is reducible. In other words, it must have a linear factor; i.e. a root over $\Z_p$. This shows that $c$ must be a square. 
\end{rem}

The case $m=3$ is also quite interesting. In this case the unital determinant of the element $a_0+a_1 \sqrt[3]r + a_2 (\sqrt[3]r)^2$ is given by 

\begin{equation} \label{eq:cubic_unital_det}
\det \begin{bmatrix}
    a_0 & r a_2 &ra_1 \\ 
    a_1 & a_0 & ra_2 \\ 
    a_2 & a_1 & a_0 
\end{bmatrix} = a_0^3+ra_1^3+r^2a_2^3 -3ra_0a_1a_2.
\end{equation}

By Lemma \ref{lem:det}, we conclude that 
\begin{cor} \label{cor:unit_cubic}
    An element $a_0+a_1 \sqrt[3]{r}+a_2 (\sqrt[3]r)^2$ is a unit in in $\Z_n[\sqrt[3]{r}]$ if and only if 
    \[a_0^3+ra_1^3+r^2a_2^3 -3ra_0a_1a_2 \in \Z_n^{\times}. \] 
\end{cor}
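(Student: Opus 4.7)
The plan is to invoke Lemma \ref{lem:det} in the special case $m=3$, which reduces the statement to a determinant computation for a $3\times 3$ matrix of the form displayed in equation (\ref{eq:cubic_unital_det}). Since Lemma \ref{lem:det} says $a_0+a_1\sqrt[3]{r}+a_2(\sqrt[3]{r})^2$ is a unit in $\Z_n[\sqrt[3]{r}]$ exactly when its unital determinant lies in $\Z_n^\times$, everything hinges on verifying that this determinant equals $a_0^3+ra_1^3+r^2a_2^3-3ra_0a_1a_2$.

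First I would write down the matrix
\[
A = \begin{bmatrix} a_0 & ra_2 & ra_1 \\ a_1 & a_0 & ra_2 \\ a_2 & a_1 & a_0 \end{bmatrix},
\]
which is the instance of the general matrix of Lemma \ref{lem:det} when $m=3$. Then I would expand $\det(A)$ along the first row (or use the rule of Sarrus, since $3\times 3$). The three cofactor terms are $a_0(a_0^2 - ra_1a_2)$, $-ra_2(a_0a_1 - ra_2^2)$, and $ra_1(a_1^2 - a_0a_2)$. Collecting gives $a_0^3 - ra_0a_1a_2 - ra_0a_1a_2 + r^2a_2^3 + ra_1^3 - ra_0a_1a_2 = a_0^3+ra_1^3+r^2a_2^3-3ra_0a_1a_2$, which is precisely the expression on the right-hand side of (\ref{eq:cubic_unital_det}).

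Once the determinant identity is in hand, the corollary follows immediately: by Lemma \ref{lem:det} the element is a unit in $\Z_n[\sqrt[3]{r}]$ iff $\det(A) \in \Z_n^\times$, and $\det(A)$ is exactly the claimed cubic form. There is no real obstacle here; the only thing to be careful about is matching the sign and factor-of-$r$ conventions in the matrix given by Lemma \ref{lem:det} with the particular displayed form, which I would double-check by comparing against the $m=3$ specialization of the general circulant pattern. In fact, since the determinant has already been recorded in (\ref{eq:cubic_unital_det}) preceding the corollary statement, the proof can simply be written as a single sentence citing Lemma \ref{lem:det} together with that displayed identity.
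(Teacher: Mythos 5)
Your proposal is correct and matches the paper's own argument: the paper likewise records the determinant of the $3\times 3$ matrix in (\ref{eq:cubic_unital_det}) as $a_0^3+ra_1^3+r^2a_2^3-3ra_0a_1a_2$ and then deduces the corollary directly from Lemma \ref{lem:det}. Your cofactor expansion verifying that identity is accurate, so there is nothing to add.
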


\subsection{$m$-power map}
 We discuss a simple case where $x^m-r$ is reducible over ${\Z}_p$. 

\begin{prop}\label{quad55}
$r$ belongs to the image of the $m$-power map $f^{[m]}:{\Bbb Z}_p\to {\Bbb Z}_p$ if and only if $x^m-r$ has a linear factor. 
\end{prop}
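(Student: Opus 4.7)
The plan is to reduce the statement to a standard application of the factor theorem over a field. Since $p$ is prime, $\Z_p$ is a field, and so $\Z_p[x]$ is a Euclidean domain in which the factor theorem holds: for any $f(x) \in \Z_p[x]$ and any $a \in \Z_p$, the polynomial $(x-a)$ divides $f(x)$ if and only if $f(a)=0$. This equivalence is exactly the bridge connecting roots in $\Z_p$ to linear factors in $\Z_p[x]$.

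With this tool in hand, I would handle both directions symmetrically. For the forward direction, suppose $r = f^{[m]}(a) = a^m$ for some $a \in \Z_p$. Then $a$ is a root of $x^m - r$, so by the factor theorem $(x - a)$ is a linear factor of $x^m - r$ in $\Z_p[x]$. Conversely, if $x^m - r$ has some linear factor $(cx - d)$ in $\Z_p[x]$, we may normalize (dividing by $c \in \Z_p^{\times}$) to assume the factor has the form $(x - a)$ with $a \in \Z_p$; the factor theorem then gives $a^m - r = 0$, i.e., $a^m = r$, so $r$ lies in the image of $f^{[m]}$.

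There is essentially no obstacle here; the only minor point to be careful about is ensuring that a linear factor can be taken to be monic, which is immediate because nonzero elements of $\Z_p$ are units. The statement is really a restatement of the factor theorem specialized to the polynomial $x^m - r$.
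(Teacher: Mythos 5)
Your proof is correct and follows essentially the same route as the paper's: both directions are applications of the factor theorem over the field $\Z_p$, identifying roots $a$ with $a^m = r$ with linear factors $x-a$ of $x^m-r$. Your extra remark about normalizing a non-monic linear factor to a monic one is a harmless refinement the paper leaves implicit.
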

\begin{proof}
Suppose, there is an element $a\in {\Bbb Z}_p$ such that
$a^m=r.$ This implies that $x-a$ a factor of $x^m-r.$ Conversely, if $x^m-r$ has a linear factor, say $x-a$, we must have $a^m=r.$
\end{proof}
\begin{cor}
    If the $m$-power map $f^{[m]}$ is onto then $x^m-r$ is reducible. 
\end{cor}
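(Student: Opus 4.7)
The statement follows almost immediately from Proposition \ref{quad55}, so my plan is essentially a one-line deduction together with a small check that the resulting factor is nontrivial. The surjectivity of $f^{[m]}$ means that every element of $\Z_p$ is an $m$-th power; in particular, $r$ itself lies in the image of $f^{[m]}$.

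Invoking Proposition \ref{quad55} in its ``if'' direction with this specific $r$, I obtain some $a \in \Z_p$ with $a^m = r$, so that $x - a$ divides $x^m - r$ in $\Z_p[x]$. Performing polynomial division yields a factorization
\[
x^m - r = (x-a)\,q(x), \qquad q(x) \in \Z_p[x], \quad \deg q = m-1.
\]
Under the implicit assumption $m \geq 2$ (otherwise the notion of reducibility is vacuous), both factors $(x-a)$ and $q(x)$ have degree at least one, so this is a genuine nontrivial factorization, and hence $x^m - r$ is reducible over $\Z_p$.

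There is no real obstacle here; the only point that requires a moment's thought is confirming that the linear factor produced by Proposition \ref{quad55} actually witnesses reducibility, which it does precisely because the cofactor $q$ has positive degree when $m \geq 2$. I would write the proof in just two or three lines, cross-referencing Proposition \ref{quad55} rather than repeating its argument.
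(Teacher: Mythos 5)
Your proof is correct and matches the paper's intent exactly: the paper states this corollary without proof as an immediate consequence of Proposition \ref{quad55}, and your two-line deduction (surjectivity gives $a^m=r$, hence a linear factor, hence reducibility for $m\ge 2$) is precisely the argument being left implicit. The brief remark about the degenerate case $m=1$ is a reasonable bit of extra care, not a deviation.
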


\begin{cor} \label{cor:m_less_than_3}
    Suppose that $m \leq 3.$ Then $x^m-r$ is reducible if and only if $r$ belongs to the image of the $m$-power map. 
\end{cor}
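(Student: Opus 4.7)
The plan is to reduce the corollary to \Cref{quad55} by showing that, when $m \leq 3$, reducibility of $x^m - r$ over $\Z_p$ forces the existence of a linear factor. One direction is automatic: by \Cref{quad55}, if $r$ is in the image of the $m$-power map then $x^m - r$ has a linear factor and is therefore reducible (at least when $m \geq 2$; the case $m=1$ is vacuous since $r = r^1$ always lies in the image).

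For the converse, suppose $x^m - r = f(x) g(x)$ is a nontrivial factorization in $\Z_p[x]$ with $1 \leq \deg f \leq \deg g$ and $\deg f + \deg g = m$. Then $\deg f \leq m/2$. For $m = 2$ this forces $\deg f = 1$, and for $m = 3$ it forces $\deg f = 1$ as well (since $\deg f$ is a positive integer at most $3/2$). Hence in both cases $x^m - r$ admits a linear factor, and applying \Cref{quad55} shows that $r$ lies in the image of the $m$-power map.

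There is essentially no obstacle: the whole content of the corollary is the elementary observation that a reducible polynomial of degree at most $3$ must have a root. The restriction $m \leq 3$ is what makes the argument go through, and indeed for $m \geq 4$ one can have a reducible $x^m - r$ with no linear factor (for instance a product of two irreducible quadratics), so this is the sharpest statement of this form. The proof should therefore be short, consisting of the two-line case analysis above together with an explicit invocation of \Cref{quad55}.
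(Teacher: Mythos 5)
Your proof is correct and follows essentially the same route as the paper: both directions reduce to Proposition \ref{quad55}, with the converse resting on the observation that a reducible polynomial of degree at most $3$ must have a linear factor. Your explicit degree count ($\deg f \leq m/2$ forces $\deg f = 1$) and the remark on sharpness for $m \geq 4$ are just slightly more detailed versions of what the paper says.
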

\begin{proof}
    If $r$ belongs to the image of the $m$-power map then by Proposition \ref{quad55}, $x^m-r$ is reducible. Conversely, suppose that $x^m-r$ is reducible. Since the degree of $x^m-r$ is at most $3$, at least one of its factors must be linear. By Proposition \ref{quad55}, we conclude that $r$ belongs to the image of the $m$-power map. 
\end{proof}

We have a quite interesting consequence of this corollary. 

\begin{prop}
    Suppose that $n=p$ is a prime number. Then $r$ belongs to the image of the $3$-power map if and only there exists $
    (a_0, a_1, a_2) \in \Z_n^3$ and $(a_0, a_1, a_2) \neq (0,0,0)$ such that $a_0^3+ra_1^3+r^2a_2^3 -3ra_0a_1a_2=0$. 
\end{prop}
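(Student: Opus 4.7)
The plan is to string together three of the results already established in the section—Corollary \ref{cor:unit_cubic}, Theorem \ref{thm:fields_condition}, and Corollary \ref{cor:m_less_than_3}—by reinterpreting both sides of the claimed equivalence as statements about the ring $\Z_p[\sqrt[3]{r}]$. Once that translation is in place, the proposition becomes a bookkeeping exercise.

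First I would translate the right-hand side. A triple $(a_0,a_1,a_2)\in\Z_p^3$ with $(a_0,a_1,a_2)\neq(0,0,0)$ corresponds to a nonzero element $a=a_0+a_1\sqrt[3]{r}+a_2(\sqrt[3]{r})^2$ of $\Z_p[\sqrt[3]{r}]$, and the quantity $a_0^3+ra_1^3+r^2a_2^3-3ra_0a_1a_2$ is exactly the unital determinant $\Delta$ of $a$ computed in \ref{eq:cubic_unital_det}. Because $p$ is prime, $\Z_p$ is a field, so $\Delta\in\Z_p^{\times}$ if and only if $\Delta\neq 0$. By Corollary \ref{cor:unit_cubic} it follows that the existence of such a triple is equivalent to the existence of a nonzero non-unit in $\Z_p[\sqrt[3]{r}]$, which is by definition equivalent to $\Z_p[\sqrt[3]{r}]$ failing to be a field.

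Next I would invoke Theorem \ref{thm:fields_condition}: since $p$ is prime, $\Z_p[\sqrt[3]{r}]$ is a field if and only if $x^3-r$ is irreducible in $\Z_p[x]$. Combining with the previous step, the right-hand side of the proposition is equivalent to the reducibility of $x^3-r$ over $\Z_p$. Finally, Corollary \ref{cor:m_less_than_3} applied with $m=3$ says that $x^3-r$ is reducible over $\Z_p$ exactly when $r$ lies in the image of the $3$-power map $f^{[3]}:\Z_p\to\Z_p$, which is the left-hand side. There is no substantive obstacle: the only point that requires a moment of care is the passage from "$\Delta$ is not a unit" to "$\Delta=0$", which relies specifically on $\Z_p$ being a field and would fail for composite $n$.
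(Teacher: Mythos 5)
Your proof is correct, and for the ``$\Leftarrow$'' direction it is exactly the paper's argument: nonzero triple with vanishing determinant $\Rightarrow$ non-unit (Corollary \ref{cor:unit_cubic}, using that $\Delta\in\Z_p^\times\iff\Delta\neq 0$ over the field $\Z_p$) $\Rightarrow$ $\Z_p[\sqrt[3]{r}]$ is not a field $\Rightarrow$ $x^3-r$ reducible (Theorem \ref{thm:fields_condition}) $\Rightarrow$ $r$ is a cube (Corollary \ref{cor:m_less_than_3}).

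The one place you diverge is the ``$\Rightarrow$'' direction. You obtain it by observing that every link in your chain is an equivalence and running it backwards: $r=b^3$ forces $x^3-r$ reducible, hence the ring is not a field, hence some nonzero non-unit exists, hence some triple has vanishing determinant. This is valid but non-constructive. The paper instead writes down an explicit witness: when $r=b^3$ the form $a_0^3+ra_1^3+r^2a_2^3-3ra_0a_1a_2$ factors with $(a_0+ba_1+b^2a_2)$ as a linear factor, so $(a_0,a_1,a_2)=(-b,1,0)$ works directly (indeed $(-b)^3+r=0$). The explicit triple is more elementary --- it needs none of the field-theoretic machinery for that direction --- while your version has the aesthetic advantage of deriving both implications from a single chain of equivalences. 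Either is acceptable; no gap.
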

\begin{proof}
    Suppose that $n$ belongs to the image of the $3$-power map, say $r=b^3$. We see that 
    \[a_0^3+ra_1^3+r^2a_2^3 -3ra_0a_1a_2= (a_0+ba_1+b^2a_2)(a_0^2+b^2a_1^2+b^4a_2^4-ba_0a_1-b^2a_0a_2-b^3a_1a_2). \]
    Therefore, if we take $a_0=-b, a_1=1, a_2=0$ then 
    \[ a_0^3+ra_1^3+r^2a_2^3 -3ra_0a_1a_2=0 .\] 
    Conversely, suppose that there exists a non-trivial triple $(a_0, a_1, a_2)$ such that $a_0^3+ra_1^3+r^2a_2^3 -3ra_0a_1a_2=0$. By Corollary \ref{cor:unit_cubic}, we know that $a_0 +a_1 \sqrt[3]r +a_2 (\sqrt[3]r)^2$ is not a unit in $\Z_n[\sqrt[3]r]$. In particular, this implies that $\Z_n[\sqrt[3]r]$ is not a field. Since $n$ is a prime number, by Theorem \ref{thm:fields_condition}, this also implies that $x^m-r$ is reducible over $\Z_n[x].$ By corollary \ref{cor:m_less_than_3}, $r$ must belongs to the image of the $3$-power map. 
\end{proof}

\begin{lemma} \label{lemma:m_power_map_onto}
    The $m$-power map is onto on $\Z_p$ if and only $\gcd(p-1, m)=1.$
\end{lemma}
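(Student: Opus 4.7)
The plan is to reduce the problem to the multiplicative group $\Z_p^{\times}$, which is cyclic of order $p-1$, and then invoke standard facts about $m$-th power homomorphisms on cyclic groups. The crucial observation is that $0^m = 0$, so the $m$-power map on $\Z_p$ is surjective if and only if its restriction to $\Z_p^{\times}$ is surjective. This reduces the statement to: the endomorphism $g \mapsto g^m$ of the cyclic group $\Z_p^{\times}$ is an automorphism iff $\gcd(p-1,m) = 1$.

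For the forward direction, assuming $\gcd(p-1,m) = 1$, I would use B\'ezout's identity to write $um + v(p-1) = 1$ for some integers $u, v$. Then for any $a \in \Z_p^{\times}$, Fermat's little theorem yields
\[ a = a^{um + v(p-1)} = (a^u)^m \cdot (a^{p-1})^v = (a^u)^m, \]
exhibiting $a$ as an $m$-th power. So every element of $\Z_p^{\times}$, and hence of $\Z_p$, lies in the image.

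For the converse, suppose $d := \gcd(p-1, m) > 1$, and let $g$ be a generator of $\Z_p^{\times}$. Every element in the image of $f^{[m]}$ restricted to $\Z_p^{\times}$ has the form $(g^k)^m = g^{km}$ and therefore satisfies $x^{(p-1)/d} = 1$, since $m(p-1)/d$ is divisible by $p-1$. However $g$ itself has order exactly $p-1 > (p-1)/d$, so $g$ cannot be an $m$-th power, and the map fails to be onto.

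No step is especially hard; the only thing to be careful about is that we handle $0$ correctly (trivially in the image) so that the argument on the unit group transfers to all of $\Z_p$. The whole argument is essentially a one-line application of the structure theorem for endomorphisms of a cyclic group.
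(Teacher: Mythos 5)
Your proof is correct and follows the same route as the paper, which simply cites the fact that $\Z_p^{\times}$ is cyclic of order $p-1$; you have merely filled in the details (the B\'ezout/Fermat computation for sufficiency and the generator argument for necessity) that the paper leaves implicit. Your care in handling $0$ separately is a small but worthwhile addition.
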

\begin{proof}
    This follows from the fact that the unit group $\Z_p^{\times}$ of $\Z_p$ is cyclic of order $p-1.$
\end{proof}


\begin{prop}\label{quad555}
For any prime $p>2,$ there is an element $a\in {\Bbb Z}_p$ such that $x^2-a$ is irreducible over ${\Bbb Z}_p.$
\end{prop}
\begin{proof} 
By Lemma \ref{lemma:m_power_map_onto}, the $2$-power map is not onto.  Let $a$ be an element in $\Z_p$ which does not belong to the image of $f^{[2]}.$ Then by Corollary \ref{cor:m_less_than_3}, $x^2-a$ is an irreducible polynomial over ${\Bbb Z}_p.$ 
\end{proof}
\begin{rem}
    There are exactly $\frac{p-1}{2}$ elements $a \in \Z_p$ such that $x^2-a$ is irreducible. 
\end{rem}

\subsection{Gaussian Integers over ${\Bbb Z}_p$}
A prime $p$ is called Pythagorean if there are integers $a$ and $b$
such that $a^2+b^2=p$. It is well-known that this condition is equivalent to the condition that either $p=2$ or $p$ is a prime of the form $4k+1$ (see \cite{I}).

\begin{prop}
 ${\Bbb Z}_p[i]$ is a field if and only if $p$ is not an Pythagorean prime. 
\end{prop}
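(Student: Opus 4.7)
The plan is to reduce the statement to a classical quadratic residue computation via Corollary \ref{cor:field_det}. Since $i = \sqrt{-1}$, we are looking at $\Z_p[\sqrt{r}]$ with $r = -1$. Corollary \ref{cor:field_det} (applied with $r = -1$) says precisely that $\Z_p[i]$ is a field if and only if there is no nonzero pair $(a,b) \in \Z_p^2$ with $a^2 + b^2 = 0$, equivalently if and only if $-1$ is \emph{not} a square in $\Z_p$. So the proposition becomes the assertion that $-1$ is a square in $\Z_p$ if and only if $p$ is Pythagorean.

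First I would dispose of the trivial case $p = 2$: here $-1 \equiv 1 = 1^2$, so $-1$ is a square; and $2$ is Pythagorean since $1^2 + 1^2 = 2$. So both sides of the equivalence agree in this case.

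Now assume $p$ is an odd prime. The paper has already cited Fermat's theorem on sums of two squares to recast "$p$ is Pythagorean" as "$p \equiv 1 \pmod 4$." So it remains to prove the classical fact that for an odd prime $p$, $-1$ is a square modulo $p$ if and only if $p \equiv 1 \pmod 4$. I would prove this using the cyclic structure of $\Z_p^\times$, which is an order-$(p-1)$ cyclic group: $-1$ is its unique element of order $2$, so $-1$ is a square exactly when $\Z_p^\times$ contains an element of order $4$, which happens exactly when $4 \mid p - 1$. (Equivalently, one can cite Euler's criterion and compute $(-1)^{(p-1)/2}$.) Combining these pieces yields the proposition.

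There is no substantive obstacle; the proof is a direct packaging of three ingredients already available: Corollary \ref{cor:field_det}, the cited characterization of Pythagorean primes, and the standard "$-1$ is a QR iff $p \equiv 1 \pmod 4$" fact, with the $p=2$ case handled separately.
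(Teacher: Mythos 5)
Your proof is correct. It follows the same backbone as the paper's for the converse direction---both pass through Corollary \ref{cor:field_det} to reduce ``$\Z_p[i]$ is not a field'' to the existence of a nontrivial solution of $a^2+b^2=0$, i.e.\ to $-1$ being a square mod $p$---but the two arguments diverge on the forward direction. The paper argues directly: given a representation $p=a^2+b^2$ (there is a typo, $p^2$ for $p$, in the source), it exhibits the explicit zero divisor $a+bi$ via $(a+bi)(a-bi)=a^2+b^2\equiv 0 \pmod p$, with no quadratic-residue theory needed. You instead route both directions through the chain ``Pythagorean $\Leftrightarrow$ $p=2$ or $p\equiv 1\pmod 4$ $\Leftrightarrow$ $-1$ is a square mod $p$,'' which makes the argument more uniform and, to your credit, actually supplies the proof (via the cyclic structure of $\Z_p^\times$, or Euler's criterion) of the equivalence that the paper merely asserts in its converse step; it also isolates the $p=2$ case cleanly, where $1^2+1^2=2$ and $-1=1^2$. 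The trade-off is that your forward direction leans on Fermat's two-squares theorem where the paper's needs only the definition of Pythagorean prime; the paper's version is therefore more elementary there, while yours is more complete on the residue computation.
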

{\it Proof.}
If $p$ is Pythagorean then there are integers $a, b$  such that $a^2+b^2=p^2$. This implies that $(a+bi)((a-bi)=a^2+b^2\equiv p$ (mod $p$) i.e., $a^2+b^2\equiv 0$ mod $p$. This
implies that $a+bi$ has a zero divisor. Thus ${\Bbb Z}_p[i]$ is not an integral domain, i.e., it is not a field.

Conversely, suppose that $\Z_p[i]$ is not a field. By Corollary \ref{cor:field_det}, there exists $a,b \in \Z_p$ such that $a^2+b^2=0.$ This is equivalent to the condition that $p=2$ or $p \equiv 1 \pmod{4}.$ In other words, $p$ is a Pythagorean prime. 
\quad $\Box$

\begin{exa}
Since $(3+4i)(3-4i)=25\equiv 0$ mod $5,$ the ring ${\Bbb Z}_5[ {i}]$ is not a field. We can also see that  $x^2+1$ is reducible
over ${\Bbb Z}_5$. In fact $x^2+1 = (x+2)(x+3)$ in $\Z_5[x].$
\quad $\Box$\\
\end{exa}

\section{Finite rings over finite fields}

In this section, we study the finite ring $\F_q[\sqrt[m]{r}]$ where $\F_q$ is the finite field with $q$ elements. This section is a bit less elementary than the previous section in the sense that we need Galois theory for some of our arguments. In fact, we have the following statement about the Galois group of a finite field. 

\begin{prop} \label{prop:galois_finite_field}
    Let $\F_{q^n}/\F_q$ be a finite extension. Then the Galois group of $\F_{q^m}/\F_q$ is a cyclic group of order $n$ generated by the Frobenius automorphism $\text{Frob}_q$ defined by $\text{Frob}(x)=x^q.$ Note that $\text{Frob}_q$ is exactly the $q$-power map $f^{[q]}.$
\end{prop}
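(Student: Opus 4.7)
The plan is to verify three claims in turn: that $\text{Frob}_q$ is a field automorphism of $\F_{q^n}$ fixing $\F_q$ pointwise, that its order in $\text{Aut}(\F_{q^n})$ is exactly $n$, and that the full Galois group of $\F_{q^n}/\F_q$ coincides with the cyclic subgroup it generates.

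First I would check that $\text{Frob}_q\colon x\mapsto x^q$ is a ring homomorphism. Write $q=p^e$ for a prime $p$. Multiplicativity is immediate, and additivity follows by iterating the characteristic-$p$ identity $(x+y)^p=x^p+y^p$ a total of $e$ times to obtain $(x+y)^q=x^q+y^q$. Since $\F_{q^n}$ is a field, the kernel is trivial and $\text{Frob}_q$ is injective; finiteness of $\F_{q^n}$ then forces surjectivity, as in the proof of Lemma \ref{lem:finite_ring}. The fixed set of $\text{Frob}_q$ is precisely the zero set of $x^q-x$ in $\F_{q^n}$, which has at most $q$ elements; since every element of $\F_q$ is already a root (by Fermat's little theorem applied to $\F_q^\times$), the fixed field is exactly $\F_q$.

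Next I would determine the order of $\text{Frob}_q$. The $k$-th iterate is the map $x\mapsto x^{q^k}$, so $\text{Frob}_q^{\,k}=\text{id}$ iff every element of $\F_{q^n}$ is a root of $x^{q^k}-x$. Because $\F_{q^n}^{\times}$ is cyclic of order $q^n-1$ (the same structural fact invoked in Lemma \ref{lemma:m_power_map_onto}), this condition is equivalent to $(q^n-1)\mid (q^k-1)$, which in turn is equivalent to $n\mid k$. Hence the order of $\text{Frob}_q$ is exactly $n$.

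Finally, to conclude that $\langle \text{Frob}_q\rangle$ is the entire Galois group, I would note that $\F_{q^n}$ is the splitting field of the separable polynomial $x^{q^n}-x$ over $\F_q$, so $\F_{q^n}/\F_q$ is Galois and $|\text{Gal}(\F_{q^n}/\F_q)|=[\F_{q^n}:\F_q]=n$. Since the cyclic subgroup generated by $\text{Frob}_q$ already has order $n$, equality follows. I expect the only subtle step to be the appeal to Galois theory for the cardinality of the automorphism group; one may instead argue elementarily that any $\F_q$-automorphism is determined by its value on a generator $g$ of $\F_{q^n}^{\times}$, whose minimal polynomial over $\F_q$ has degree $n$ and hence has at most $n$ roots in $\F_{q^n}$, giving the same upper bound of $n$.
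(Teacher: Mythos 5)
The paper states this proposition without any proof at all---it is offered as a standard fact from the theory of finite fields, and the text moves straight on to the next definition. Your argument is a correct and complete proof of that standard fact: the verification that $x\mapsto x^q$ is an automorphism with fixed field exactly $\F_q$ (via the freshman's-dream identity and counting roots of $x^q-x$), the computation that its order is $n$ (via cyclicity of $\F_{q^n}^{\times}$ and the equivalence $(q^n-1)\mid(q^k-1)\iff n\mid k$), and the cardinality bound $|\mathrm{Gal}(\F_{q^n}/\F_q)|\le n$ (either from the splitting-field characterization or, more elementarily, from the fact that an automorphism is determined by where it sends a generator of $\F_{q^n}^{\times}$, which must go to one of the at most $n$ roots of its degree-$n$ minimal polynomial) all check out. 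Your elementary alternative for the last step is in keeping with the paper's stated aim of minimizing reliance on heavy machinery, so if the authors ever wished to include a proof, your version would fit the exposition well. The only blemish in the statement itself---the mismatch between $\F_{q^n}$ and $\F_{q^m}$---is a typo in the paper, not an issue with your argument.
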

\begin{definition}
    Let $a,b$ be two relatively prime integers. The order of $a$ modulo $b$, denoted by $\ord_{b}(a)$ is the smallest number $h$ such that $a^h \equiv 1 \pmod{b}$. 
\end{definition}
We also need the following definition. 
\begin{definition}
    Let $\F_p$ be the prime finite field of characteristics $p$ and $r \in \overline{\F_p}^{\times}.$ The order of $r$, denoted by $\ord(r)$ is defined to be the smallest positive integer $h$ such that $r^h=1.$ 
\end{definition}
We are now ready to discuss the factorization of $x^m-r$ over a finite field $\F_q$ under the assumption that $\F_q$ contains a primitive $m$-roots of unity. We first observe the following phenomenon. 

\begin{thm} \label{thm:equal_degree}
    Suppose that $\F_q$ contains a primitive $m$-root of unity. Then $x^m-r$ can be factorized into a product of $\frac{m}{t}$ irreducible polynomials of degree $t$ where $t = \ord_{\ord(r) m}(q)$; namely, $t$ is the smallest number such that $ \ord(r) m |q^t-1.$ 
\end{thm}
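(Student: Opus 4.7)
My plan is to determine the degree $t$ of the extension $\F_q(\alpha)/\F_q$ for any root $\alpha$ of $x^m-r$, and then use the structure of the roots to deduce that every irreducible factor of $x^m-r$ has this same degree.

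First I would observe that since $\F_q$ contains a primitive $m$-th root of unity $\zeta$, we have $m \mid q-1$, so $\gcd(m,q)=1$ and $x^m-r$ is separable over $\overline{\F_q}$. Fix any root $\alpha$ in $\overline{\F_q}$; the complete set of roots is then $\{\alpha\zeta^i : 0 \le i \le m-1\}$. The crucial observation is that because $\zeta \in \F_q$, every root generates the same subfield $\F_q(\alpha)$ over $\F_q$. Hence each root has the same degree $t := [\F_q(\alpha):\F_q]$ over $\F_q$, which forces $x^m-r$ to factor into exactly $m/t$ irreducible polynomials, each of degree $t$.

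Next I would identify $t$ as the smallest positive integer such that $\F_{q^t}$ contains a root of $x^m-r$. Since $\F_{q^t}^{\times}$ is cyclic of order $q^t-1$, the image of the $m$-th power map on $\F_{q^t}^{\times}$ is the unique subgroup of order $(q^t-1)/\gcd(m, q^t-1)$. Because $m \mid q-1 \mid q^t-1$, this subgroup has order $(q^t-1)/m$. The element $r$, of order $\ord(r)$ in $\F_q^{\times} \subseteq \F_{q^t}^{\times}$, lies in this image precisely when $\ord(r) \mid (q^t-1)/m$, equivalently when $m\cdot\ord(r) \mid q^t-1$. Combining these observations, $t$ is the smallest positive integer such that $m\cdot\ord(r) \mid q^t-1$, which is exactly $\ord_{m\cdot\ord(r)}(q)$, completing the plan.

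The main obstacle is the initial reduction: recognizing that the presence of $\zeta$ in $\F_q$ forces all irreducible factors to share a common degree $[\F_q(\alpha):\F_q]$. Once that reduction is in hand, the computation of $t$ reduces to a routine cyclic-group calculation in $\F_{q^t}^{\times}$. As a sanity check, one can verify that the resulting $t$ indeed divides $m$ (as required for $m/t$ to be an integer) by using $q \equiv 1 \pmod{\mathrm{lcm}(m, \ord(r))}$ together with $q^{m-1}+\cdots+1 \equiv 0 \pmod m$ to conclude $m\cdot\ord(r)\mid q^m-1$.
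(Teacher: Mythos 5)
Your proposal is correct, and it reaches the same two milestones as the paper's proof (all irreducible factors share a common degree $t$; and $t=\ord_{\ord(r)m}(q)$), but by a noticeably different route at both steps. For the equal-degree claim, the paper fixes a root $\alpha$, computes its Frobenius orbit $\{\alpha,\alpha^q,\ldots,\alpha^{q^{t-1}}\}$, and argues that the resulting formula for $t$ does not mention $\alpha$; you instead observe directly that the full root set is $\{\alpha\zeta^i\}$ with $\zeta\in\F_q$, so every root generates the same field $\F_q(\alpha)$. Your version is cleaner and makes the uniformity of the degree transparent from the outset rather than as a by-product of the formula. For the identification of $t$, the paper works with the condition $\alpha^{q^t-1}=1$ and converts it to $r^{(q^t-1)/m}=1$ (implicitly using $\ord(\alpha)\mid m\,\ord(r)$ for the reverse implication, which it does not spell out); you instead characterize $t$ as the least $s$ for which $r$ is an $m$-th power in $\F_{q^s}^{\times}$ and compute this via the unique subgroup of order $(q^s-1)/m$ in the cyclic group $\F_{q^s}^{\times}$. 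This trades the Galois/Frobenius bookkeeping for a power-residue computation; both are legitimate, and your cyclic-group argument cleanly gives the needed equivalence $r\in(\F_{q^s}^{\times})^m\iff m\,\ord(r)\mid q^s-1$ in both directions. Your closing sanity check that $t\mid m$ is also correct and is a point the paper leaves implicit.
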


\begin{proof}
    Since $\gcd(q,m)=1$, the polynomial $x^m-r$ is separable over $\F_q.$ Let $\alpha$ be any root of $x^m-r$ over $\overline{\F_q}.$ Then, by definition $\alpha^m=r.$ The finite extension $\F_q(\alpha)/\F_q$ is a Galois extension whose Galois group is generated by the Frobenius map $a \mapsto a^q$. Consequently, the Galois conjugates of $\alpha$ are $\{\alpha, \alpha^{q}, \ldots, \alpha^{q^{t-1}} \}$ where $t$ is the smallest positive integer such that $\alpha^{q^t}=\alpha.$ Equivalently, $\alpha^{q^t-1}=1.$ We remark that since the Galois orbit of $\alpha$ over $\F_q$ has $t$ elements,  the minimal polynomial of $\alpha$ over $\F_q$ must have degree $t.$ We claim that 
    $t= \ord_{\ord(r) m}(q)$, independent of $\alpha$. If this is true, our theorem is proved. In fact, since $\F_q$ contains a primitive $m$-root of unity, we know that $m|q-1.$ We then see that 
    \[ 1 = \alpha^{q^t-1} = (\alpha^m)^{\frac{q^t-1}{m}} = r^{\frac{q^t-1}{m}}.\]
    The above equality implies that $\ord(r)|\frac{q^t-1}{m}$ or equivalently $\ord(r) m |q^t-1.$ Since $t$ is the smallest positive integer with this property, we conclude that $t = \ord_{\ord(r) m}(q).$
\end{proof}

\begin{cor}
    Let $\F_q, r, m, t$ be as in Theorem \ref{thm:equal_degree}. Then $\F_q[\sqrt[m]{r}]$ is isomorphic to $\F_{q^t}^{\frac{m}{t}}.$
\end{cor}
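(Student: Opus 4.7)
The plan is to combine the isomorphism $\F_q[\sqrt[m]{r}] \cong \F_q[x]/\langle x^m - r \rangle$ with the factorization provided by Theorem \ref{thm:equal_degree}, and then apply the Chinese Remainder Theorem in the polynomial ring $\F_q[x]$. Writing $x^m - r = f_1(x) f_2(x) \cdots f_{m/t}(x)$ as a product of $m/t$ irreducible polynomials of degree $t$ (guaranteed by Theorem \ref{thm:equal_degree}), the goal is to show these factors are pairwise coprime so that CRT gives a product decomposition.

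First, I would verify that the factorization is into \emph{distinct} irreducible factors; equivalently, that $x^m - r$ is separable over $\F_q$. Since $\F_q$ contains a primitive $m$-th root of unity, we have $m \mid q - 1$, and in particular $\gcd(m, q) = 1$. Thus the formal derivative $mx^{m-1}$ is nonzero and coprime to $x^m - r$, so $x^m - r$ has no repeated roots in $\overline{\F_q}$. This ensures the $f_i$ are pairwise distinct, and since each is irreducible, they are pairwise coprime in the PID $\F_q[x]$.

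Next, I would apply the Chinese Remainder Theorem to get
\[
\F_q[x]/\langle x^m - r \rangle \;\cong\; \prod_{i=1}^{m/t} \F_q[x]/\langle f_i(x) \rangle.
\]
Because each $f_i$ is irreducible of degree $t$ over $\F_q$, each quotient $\F_q[x]/\langle f_i \rangle$ is a field of cardinality $q^t$, and by uniqueness of finite fields of a given order, $\F_q[x]/\langle f_i \rangle \cong \F_{q^t}$. Substituting back and using $\F_q[\sqrt[m]{r}] \cong \F_q[x]/\langle x^m - r \rangle$ yields $\F_q[\sqrt[m]{r}] \cong \F_{q^t}^{m/t}$, as required.

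There is no real obstacle here beyond carefully invoking the right prerequisites; the only delicate point is the separability check, which is why I foreground the observation $\gcd(m,q)=1$. Everything else is a routine application of CRT combined with the classification of finite fields.
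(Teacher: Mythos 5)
Your proof is correct and is exactly the intended derivation: the paper states this corollary without proof as an immediate consequence of Theorem \ref{thm:equal_degree}, and the route you take---identify $\F_q[\sqrt[m]{r}]$ with $\F_q[x]/\langle x^m-r\rangle$, factor into distinct irreducibles of degree $t$, apply the Chinese Remainder Theorem, and invoke uniqueness of finite fields---is the standard way to fill it in. The separability point you foreground is already noted in the paper's proof of Theorem \ref{thm:equal_degree} (``Since $\gcd(q,m)=1$, the polynomial $x^m-r$ is separable over $\F_q$''), so nothing is missing.
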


We discuss a consequence of Theorem \ref{thm:equal_degree}. First, we recall the $k$-power map $f^{[k]}: \F_q \to \F_q$ defined by $
f^{[k]}(a)=a^k.$

\begin{thm} \label{thm: k_power}
    Suppose that $\F_q$ contains a primitive $m$-root of unity. Then $x^m-r$ is irreducible over $\F_q[x]$ if and only if $r$ does not belong to the image for each $k|m$ and $k>1.$
\end{thm}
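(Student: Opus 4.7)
The plan is to prove both implications separately, using Theorem \ref{thm:equal_degree} together with a direct analysis of the Frobenius action on the $m$-th roots of $r$ in $\overline{\F_q}$. Throughout, let $\zeta \in \F_q$ denote a fixed primitive $m$-th root of unity, which exists by assumption.

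For the easy direction, suppose $r = s^k$ with $s \in \F_q$ and $k > 1$ a divisor of $m$. Writing $m = k\ell$, the element $\eta := \zeta^{\ell}$ is a primitive $k$-th root of unity lying in $\F_q$, so one obtains the factorization
\[ x^m - r = (x^{\ell})^k - s^k = \prod_{i=0}^{k-1}(x^{\ell} - \eta^i s), \]
which splits $x^m - r$ into $k \geq 2$ factors of degree $\ell < m$. Hence $x^m - r$ is reducible.

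Conversely, suppose $x^m - r$ is reducible. By Theorem \ref{thm:equal_degree}, it factors into $m/t$ irreducible polynomials of degree $t$, with $t \mid m$ and $t < m$. Set $k := m/t$, so $k \mid m$ and $k \geq 2$; the goal is to show $r \in (\F_q^{\times})^k$. Pick a root $\alpha \in \overline{\F_q}$ of $x^m - r$. Every other root has the form $\zeta^i \alpha$, and since $\zeta \in \F_q$ is fixed by Frobenius, there is a unique $j \in \{0, \ldots, m-1\}$ with $\alpha^q = \zeta^j \alpha$. A short induction then gives $\text{Frob}_q^i(\alpha) = \zeta^{ij}\alpha$, so the Frobenius orbit of $\alpha$ has size $m/\gcd(m, j)$. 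This orbit size equals the degree of the minimal polynomial of $\alpha$, which is $t$, and hence $\gcd(m, j) = k$. On the other hand, $\alpha^{q-1} = \zeta^j$, and since $m \mid q - 1$ we may rewrite this as $r^{(q-1)/m} = \zeta^j$. Raising both sides to the $(m/k)$-th power yields
\[ r^{(q-1)/k} = \zeta^{jm/k} = 1, \]
the last equality holding because $k = \gcd(m, j)$ divides $j$, so $jm/k$ is a multiple of $m$. Since $k \mid q - 1$ and the subgroup $(\F_q^{\times})^k$ consists precisely of those $x$ satisfying $x^{(q-1)/k} = 1$, we conclude that $r$ is a $k$-th power in $\F_q$, as required.

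The main obstacle is the converse direction: controlling the degree of the minimal polynomial of a root $\alpha$ through the integer $j$ describing the Frobenius action, and then translating this back into a statement about $r$ itself in $\F_q^{\times}$. The forward direction is a routine polynomial factorization and presents no genuine difficulty.
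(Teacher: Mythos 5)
Your proof is correct, and in the converse direction it takes a genuinely different route from the paper. The paper also reduces to showing that the common degree $t$ of the irreducible factors equals $m$, but it does so purely arithmetically: it takes the formula $t=\ord_{\ord(r)m}(q)$ from Theorem \ref{thm:equal_degree}, uses $q\equiv 1\pmod m$ to turn the condition $\ord(r)\,m\mid q^t-1$ into $\frac{q-1}{\ord(r)}\,t\equiv 0\pmod m$, and then shows $\gcd\bigl(m,\frac{q-1}{\ord(r)}\bigr)=1$ from the hypothesis on $r$, forcing $m\mid t$. You instead prove the contrapositive by a Kummer-style analysis of the Frobenius action on the roots: writing $\alpha^q=\zeta^j\alpha$, reading off the orbit size $m/\gcd(m,j)=t$, and converting $\alpha^{q-1}=\zeta^j$ into $r^{(q-1)/k}=1$ with $k=m/t$, so that $r$ lands in the index-$k$ subgroup $(\F_q^{\times})^k$. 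Your argument has two modest advantages: it exhibits explicitly \emph{which} $k$ witnesses the failure of irreducibility (namely $k=m/t$), and it barely uses Theorem \ref{thm:equal_degree} at all --- only that some irreducible factor has degree $t<m$, since the orbit computation already gives $t\mid m$ --- whereas the paper's route leans on the order formula $t=\ord_{\ord(r)m}(q)$ and on expressing $r$ as a power of a primitive root. The paper's version, in exchange, makes the dependence of $t$ on $\ord(r)$ explicit, which it then reuses in Corollary \ref{cor:counting}. Your forward direction is the same factorization idea as the paper's (the paper exhibits the single factor $x^{m/k}-b$ of $(x^{m/k})^k-b^k$; you give the full splitting over the $k$-th roots of unity), and both are fine. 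One trivial caveat common to both arguments: the case $r=0$ should be excluded or noted separately, since $\ord(r)$ and the relation $\alpha^q=\zeta^j\alpha$ both presuppose $r\neq 0$.
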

\begin{proof}
    If $r$ belongs to the image of $f^{[k]}$ for some $k|m$ and $k>1$, then we can write $x=b^k$ for some $k>1$ and $k|m$ then we can see that $x^m-r$ is reducible since $x^{m/k}-b$ is a factor of $x^m-r.$ Conversely, suppose that $r$ does not belong to the image of $f^{[k]}$ for all $k|m$ and $k>1.$ We claim that $x^m-r$ is irreducible. In fact, let $t$ be the degree of an irreducible factor of $x^m-r$ over $\F_q[x]$ (by Theorem \ref{thm:equal_degree}, $t$ is independent of the choice of the irreducible factor). We claim that $t=m.$ We know that 
    \[ t = \ord_{\ord(r)m}(q) = \ord_{(q-1)m}(q).\] 
By definition, $t$ is the smallest positive integer such that 
$\ord(r) m | q^t-1.$ Put it equivalently 
\[ m|\frac{q^t-1}{\ord(r)} = \frac{q-1}{\ord(r)}(1+q+\ldots+q^{t-1}).\]
Since $\F_q$ contains a primitive $m$-root of unity, we know that $m|q-1.$ Consequently 
\begin{equation} \label{eq:congruence}
0 \equiv  \frac{q-1}{\ord(r)} (1+ q+\ldots+q^{t-1}) \equiv \frac{q-1}{\ord(r)} t \pmod{m}.
\end{equation}
Let $d=\gcd(m, \frac{q-1}{\ord(r)})=1.$ We claim that $d=1$. Indeed, since $\F_{q}^{\times}$ is a cyclotomic group of order $q-1$, there exists a primitive root in $g \in \F_q^{\times}$ such that 
$r = g^{\frac{q-1}{\ord(r)}}.$ Since $d|\frac{p-1}{\ord(r)}$, $r$ belongs to the image of $f^{[d]}.$ By our assumption on $r$, we must have $d=1.$ From this fact and Equation \ref{eq:congruence}, we conclude that $t \equiv 0 \pmod{m}$. Since $t|m$, we must have $t=m$. This shows that $x^m-r$ is irreducible. 
\end{proof}
We discuss two corollaries Theorem \ref{thm: k_power}. 

\begin{cor}
If $r$ is a primitive root in $\F_q^{\times}$ and $\F_q$ contains a primitve $m$-root of unity, then $x^m-r$ is irreducible over $\F_q[x]$.
\end{cor}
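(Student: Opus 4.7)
The plan is to derive the corollary directly from Theorem \ref{thm: k_power}. That theorem says that under the standing hypothesis that $\F_q$ contains a primitive $m$-th root of unity, the polynomial $x^m - r$ is irreducible over $\F_q[x]$ precisely when $r$ avoids the image of the $k$-power map $f^{[k]}$ for every divisor $k$ of $m$ with $k > 1$. So the task reduces to showing that a primitive root $r \in \F_q^\times$ cannot be written as $a^k$ for any such $k$.

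First I would record the key numerical consequence of the hypotheses: the existence of a primitive $m$-th root of unity in $\F_q$ forces $m \mid q-1$, so for any divisor $k$ of $m$ we automatically have $k \mid q-1$, i.e.\ $\gcd(k, q-1) = k$. Next I would describe the image of $f^{[k]}$ on $\F_q^\times$: because $\F_q^\times$ is cyclic of order $q-1$, the set of $k$-th powers is the unique subgroup of order $(q-1)/\gcd(k,q-1) = (q-1)/k$. When $k > 1$ this is a proper subgroup of $\F_q^\times$.

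The conclusion is then immediate: a primitive root $r$ has order exactly $q-1$, so it cannot lie in any proper subgroup of $\F_q^\times$. In particular, $r \notin \operatorname{Im}(f^{[k]})$ for every divisor $k > 1$ of $m$, and Theorem \ref{thm: k_power} delivers irreducibility of $x^m - r$.

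I do not anticipate a real obstacle here; the only thing to be careful about is to invoke $m \mid q-1$ explicitly so that the divisibility $k \mid q-1$ is justified before reducing the $k$-th power subgroup order. Once that bookkeeping is in place, the proof is a single appeal to the cyclic structure of $\F_q^\times$ together with Theorem \ref{thm: k_power}.
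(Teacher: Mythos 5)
Your proposal is correct and follows essentially the same route as the paper: both reduce the statement to Theorem \ref{thm: k_power} and observe that a primitive root, having order $q-1$, cannot lie in the image of $f^{[k]}$ for any $k>1$ dividing $m$ (using $m\mid q-1$). You merely spell out the cyclic-subgroup bookkeeping that the paper leaves implicit.
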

\begin{proof} Since $r$ is a primitive root and $m|q-1$, $r$ can not belong to the image of $f^{[k]}$ for all $k|m$ and $k>1.$ By Theorem \ref{thm: k_power}, $x^m-r$ must be irreducible. 
\end{proof}
The next corollary is a direct consequence of Theorem \ref{thm: k_power}.
\begin{cor}
Suppose that $m$ is a prime number and $\F_q$ contains a primitive $m$-root of unity. Then $x^m-r$ is irreducible if and only $r$ is not an $m$-power. 
\end{cor}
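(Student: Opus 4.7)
The plan is to apply Theorem \ref{thm: k_power} directly, using the hypothesis that $m$ is prime to collapse the list of conditions appearing there down to a single condition. Recall that Theorem \ref{thm: k_power} states that, under the assumption $\F_q$ contains a primitive $m$-th root of unity, the polynomial $x^m - r$ is irreducible over $\F_q[x]$ if and only if $r$ does not lie in the image of the $k$-power map $f^{[k]}$ for any divisor $k$ of $m$ with $k > 1$.

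Now I would observe the following elementary fact: if $m$ is a prime, then the only divisor of $m$ strictly greater than $1$ is $m$ itself. Consequently the family of conditions ``$r \notin \operatorname{im}(f^{[k]})$ for every $k \mid m$ with $k>1$'' collapses to the single condition ``$r \notin \operatorname{im}(f^{[m]})$''. Unwinding the definition, $r \in \operatorname{im}(f^{[m]})$ means precisely that there exists $b \in \F_q$ with $b^m = r$, i.e.\ that $r$ is an $m$-th power in $\F_q$.

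Combining these two observations yields the desired equivalence: under the stated hypotheses, $x^m - r$ is irreducible over $\F_q[x]$ if and only if $r$ is not an $m$-th power in $\F_q$. There is no real obstacle, since the entire proof is a one-line specialization of Theorem \ref{thm: k_power}; the only thing to be careful about is to state clearly the reduction of the divisor set of $m$ to $\{1, m\}$ and then to explicitly identify the failure of being in the image of $f^{[m]}$ with not being an $m$-th power.
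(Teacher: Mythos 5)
Your proof is correct and matches the paper's intent exactly: the paper states this corollary as a direct consequence of Theorem \ref{thm: k_power} without further argument, and your specialization (the only divisor $k$ of a prime $m$ with $k>1$ is $m$ itself, so the condition collapses to $r\notin\operatorname{im}(f^{[m]})$) is precisely the one-line reduction being invoked. Nothing is missing.
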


\begin{cor} \label{cor:counting}
    Suppose that $\F_q$ contains a primitive $m$-root of unity. Let $M$ be the  positive integer such that 
    \begin{enumerate}
        \item $m|M$ and $\text{rad}(M)=\text{rad}(m)$. Here, for a positive integer $a$, $\text{rad}(a)$ is the product of all distinct prime factors of $a.$
        \item $\gcd(M, \frac{q-1}{M})=1.$
    \end{enumerate}
    Then, the number of $r \in \F_q$ such that $x^m-r$ is irreducible is $\varphi(M) \dfrac{q-1}{M}.$
    
\end{cor}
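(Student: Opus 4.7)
The plan is to combine Theorem \ref{thm: k_power} with an elementary counting argument in the cyclic group $\F_q^{\times}$.

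First, I would record a minor sharpening of Theorem \ref{thm: k_power}: the polynomial $x^m - r$ is irreducible over $\F_q[x]$ if and only if $r$ does not lie in the image of $f^{[p]}$ for any prime $p$ dividing $m$. This is because every divisor $k > 1$ of $m$ has a prime factor $p$, and the image of $f^{[k]}$ is contained in the image of $f^{[p]}$, so the prime-divisor conditions imply the conditions for all $k \mid m$ with $k > 1$.

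Next, I would fix a generator $g$ of $\F_q^{\times}$ and write each $r \in \F_q^{\times}$ uniquely as $r = g^a$ with $a \in \Z/(q-1)\Z$. For a prime $p \mid m$, the hypothesis $m \mid q-1$ gives $p \mid q-1$, so $\gcd(p, q-1) = p$, and hence $r = g^a$ lies in the image of $f^{[p]}$ if and only if $p \mid a$. Therefore $x^m - r$ is irreducible if and only if $\gcd(a, \text{rad}(m)) = 1$, which by condition (1) is equivalent to $\gcd(a, M) = 1$. The element $r = 0$ is automatically excluded since $x^m$ is visibly reducible, and the case $a = 0$ (corresponding to $r = 1$) is correctly excluded because $\gcd(0, M) = M > 1$.

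Finally, I would count the $a \in \Z/(q-1)\Z$ with $\gcd(a, M) = 1$. Condition (2) guarantees $M \mid q-1$ together with $\gcd(M, (q-1)/M) = 1$, so the Chinese Remainder Theorem yields an isomorphism
\[
\Z/(q-1)\Z \;\cong\; \Z/M\Z \,\times\, \Z/\tfrac{q-1}{M}\Z,
\]
under which the condition $\gcd(a, M) = 1$ depends only on the first coordinate. The number of such $a$ is therefore $\varphi(M) \cdot \frac{q-1}{M}$, which is exactly the claimed count. The only mildly subtle point is verifying that $M$ (rather than $m$ or $\text{rad}(m)$ itself) is the correct auxiliary modulus; condition (2) is engineered precisely to make the CRT step work cleanly, and the remainder of the argument is a routine application of Theorem \ref{thm: k_power} and the cyclic structure of $\F_q^{\times}$.
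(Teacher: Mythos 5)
Your proof is correct. The paper reaches the same count by a slightly different bookkeeping: it invokes the proof of Theorem \ref{thm: k_power} to recast irreducibility of $x^m-r$ as $\gcd\bigl(m,\tfrac{q-1}{\ord(r)}\bigr)=1$, observes that under the hypotheses this is the same as $M\mid \ord(r)$, and then counts by summing $\varphi(Mh)=\varphi(M)\varphi(h)$ over the possible orders $Mh$ with $h\mid\frac{q-1}{M}$, finishing with $\sum_{d\mid n}\varphi(d)=n$. You instead work on the discrete-logarithm side: writing $r=g^a$, you reduce the criterion to $\gcd(a,\mathrm{rad}(m))=1$ (equivalently $\gcd(a,M)=1$) and count via the CRT splitting $\Z/(q-1)\Z\cong\Z/M\Z\times\Z/\tfrac{q-1}{M}\Z$. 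These are dual descriptions of the same subset of the cyclic group, and condition (2) enters at the analogous spot in each (multiplicativity of $\varphi$ for the paper, the CRT isomorphism for you). Two small advantages of your version: the reduction to prime divisors of $m$ and the explicit $p$-th power criterion make the argument self-contained rather than leaning on ``by the proof of this theorem,'' and your computation makes visible that the answer is really $(q-1)\prod_{p\mid m}(1-1/p)$, so that $M$ and hypothesis (2) are only a device for packaging this as $\varphi(M)\tfrac{q-1}{M}$. (Like the paper, you implicitly assume $m\ge 2$; for $m=1$ the statement degenerates since $r=0$ would also have to be counted.)
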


\begin{proof} By Theorem \ref{thm: k_power}, we know that $x^m-r$ is irreducible if and only if $r$ does not belong to the image of $f^{[k]}$ for each $k|m$ and $k>1$. By the proof of this theorem, this is equivalent to saying that $\gcd(m, \frac{q-1}{\ord(r)})=1.$ Because $m|q-1$, this is also equivalent to $M|\ord(r).$ Let us write $\ord(r)=M h$ where $h|\frac{q-1}{M}.$ Since the group $\F_q^{\times}$ is cyclic, the number of elements of order $Mh$ is exactly $\varphi(Mh) = \varphi(M)\varphi(h).$ We conclude that the number of irreducible polynomials of the form $x^m-r$ is exactly 

\[ \sum_{h|\frac{q-1}{M}} \varphi(M) \varphi(h) = \varphi(M) \sum_{h|\frac{q-1}{M}}  \varphi(h) = \varphi(M) \frac{q-1}{M}. \]
The last equality follows from the fact that for any positive integer $n$ 
\[ \sum_{d|n} \varphi(d)=n.\]
\end{proof}
\begin{rem}
If we assume that $m$ is a squarefree number then we can relax the condition that $\F_q$ contains a primitive $m$-root of unity in Corollary \ref{cor:counting}. In fact, suppose that $\F_q$ does not contain a primitive $m$-root of unity. This implies that $m \nmid q-1.$ Because $m$ is a squarefree number, there exists a prime divisor $d$ of $m$ such that $d \nmid q-1.$ This implies that the $f^{[d]}$ power map is surjective. In other words, we can write $r=b^d$ for some $b \in \F_q.$ This implies that $x^{m/d}-b$ is a factor of $x^m-r.$ As a corollary, $x^m-r$ is always reducible. 
\end{rem}



\bigskip

\bigskip

\end{document}